\documentclass[12pt,a4paper]{article}
\usepackage{amsmath,amssymb,amsthm}
\usepackage{amssymb,latexsym, bbm,comment}
\usepackage{graphicx}
\usepackage{makeidx}
\newtheorem{theorem}{Theorem}[section]
\newtheorem{lemma}[theorem]{Lemma}
\newtheorem{prop}[theorem]{Proposition}
\newtheorem{cor}[theorem]{Corollary}
\numberwithin{equation}{section}

\newcommand{\R}{\mathbb{R}}
\newcommand{\Rd}{\mathbb{R}^d}

\newcommand{\C}{\mathbb{C}}

\newcommand{\bean}{\begin{eqnarray*}}
\newcommand{\eean}{\end{eqnarray*}}

\newcommand{\Z}{\mathbb{Z}}

\begin{document}

\date{}

\title{On the Spectrum of Self--Adjoint L\'{e}vy Generators}

\author{David Applebaum\\ School of Mathematics and Statistics,\\ University of
Sheffield,\\ Hicks Building, Hounsfield Road,\\ Sheffield,
England, S3 7RH\\ ~~~~~~~\\e-mail: D.Applebaum@sheffield.ac.uk}

\maketitle

\begin{abstract} We investigate the spectrum of the generator of a self-adjoint transition semigroup of a (symmetric) L\'{e}vy process taking values in $d$--dimensional space.
\end{abstract}

\section{Introduction} L\'{e}vy processes are essentially stochastic processes with independent increments. They have been extensively studied in recent years, as they include not only the most important Gaussian process -- Brownian motion, but also many rich processes with jumps such as the $\alpha$-stable ones. Four monograph treatments have been published in the last twenty five years that explore different aspects of their theory and application \cite{Be,Sa,Appbk,Kyp}. The transition semigroups induced by the process may be studied in various Banach spaces, such as the space of continuous functions that vanish at infinity and the $L^{p}$--spaces. The fact that these operators, and their generators, may be written as pseudo--differential operators, whose symbol is given by the characteristic exponent of the process, has led to extensive work on L\'{e}vy--type operators that generate a very large class of Feller processes -- see e.g. \cite{Jac1,Jac2,Jac3,Jac4,BSW}. In this short note, we are interested in the generator as an operator acting in $L^{2}(\Rd)$.

For a long time, necessary and sufficient conditions have been known for when such an operator is self--adjoint and this material has recently been reviewed in Chapter 4 of \cite{Appnew}. Such operators are of interest in mathematical physics as they include both the free non--relativistic Hamiltonian (i.e. the (negative) Laplacian), and its relativistic counterpart. Some deep perturbation theory studies of these operators (at the form sum level), have been carried out in \cite{HS}, while \cite{CMS} has studied the relationship between recurrence of the underlying L\'{e}vy process and the existence of at least one negative bound state under perturbation by  negative bounded potentials with compact support. However the author is not aware of any studies of the spectrum of the operator in question. For the case of L\'{e}vy processes on the torus, the spectrum consists of eigenvalues corresponding to the range of the characteristic exponent (see section 5.2.3 in \cite{Appnew}). When we work on $\Rd$, we expect to obtain a continuous spectrum which is again given by the range of the characteristic exponent. After collecting together all the known facts we need in section 2, we will prove this conjecture in section 3 and present some examples. We use only elementary techniques in this note, the main purpose of which is to stimulate interest in the somewhat neglected area where spectral theory meets probability.

\section{Preliminaries}

The material in the section can be found in Chapters 3 and 4 of \cite{Appnew}. Much of it is also in Chapter 3 of \cite{Appbk}. Let $(\mu_{t}, t \geq 0)$ be a weakly continuous convolution semigroup of probability measures in $\Rd$. These arise naturally as the collections of laws of a L\'{e}vy process taking values in $\Rd$.
We obtain a strongly continuous semigroup  $(T_{t}, t \geq 0)$ of contractions in $L^{2}(\Rd)$ by the prescription

$$ T_{t}f(x) = \int_{\Rd}f(x + y)\mu_{t}(dy),$$
for all $x \in \Rd, t \geq 0, f \in L^{2}(\Rd)$.

In this article, we will require that $\mu_{t}$ is symmetric for all $t \geq 0$, i.e. $\mu_{t}(A) = \mu_{t}(-A)$, for all Borel sets $A$ in $\Rd$. In this case the semigroup $(T_{t}, t \geq 0)$ is self--adjoint, and its self--adjoint infinitesimal generator $A$ acts on the space $C^{2}_{c}(\Rd)$ of twice continuously differentiable functions with compact support on $\Rd$ by the prescription

\begin{eqnarray} \label{gen}
Af(x) & = & \sum_{i,j = 1}^{d}a_{ij} \partial_{i} \partial_{j}f(x) \nonumber \\
& + & \int_{\Rd}(f(x + y) - 2f(x) + f(x - y))\nu(dy),
\end{eqnarray}

for all $f \in C^{2}_{c}(\Rd), x \in \Rd$, where $a = (a_{ij})$ is a non--negative definite symmetric $d \times d$ matrix, and $\nu$ is a symmetric L\'{e}vy measure, i.e. $\nu(\{0\}) = 0$ and $\int_{\Rd}(|y|^{2} \wedge 1) \nu(dy) < \infty$ (see Chapter 4 of \cite{Appnew} for the proof).

We have the L\'{e}vy--Khintchine formula

$$ \int_{\Rd}e^{i u \cdot x}\mu_{t}(dy) = e^{-t \eta(u)},$$

where $\eta$ is a continuous, negative definite function with $\eta(0) = 0$ that takes the form

\begin{equation} \label{symbol}
\eta(u) = au \cdot u + \int_{\Rd}(1 - \cos(u \cdot y))\nu(dy),
\end{equation}
for all $u \in \Rd$.

We define the usual Fourier transform $\widehat{f}$ of $f \in L^{2}(\Rd)$ by

$$ \widehat{f}(y) = \frac{1}{(2\pi)^{d/2}}\int_{\Rd}e^{-i x \cdot y}f(x)dx,$$
for $y \in \Rd$.

The domain of the generator $A$ is the non--isotropic Sobolev space
$$ {\mathcal H}_{\eta}(\Rd) = \{f \in L^{2}(\Rd), \int_{\Rd}\eta(y)^{2}|\widehat{f}(y)|^{2}dy < \infty\}.$$
Clearly $C^{2}_{c}(\Rd) \subseteq {\mathcal H}_{\eta}(\Rd)$, and on this larger domain we have the pseudo--differential operator representation

\begin{equation} \label{pseudo}
Af(x) = -\frac{1}{(2\pi)^{d/2}}\int_{\Rd}e^{i x \cdot y}\eta(y)\widehat{f}(y)dy.
\end{equation}

Underlying (\ref{pseudo}) is the formal relation

$$ Ae^{i u \cdot x} = -\eta(u)e^{i u \cdot x},$$

which suggests that the spectrum $\sigma(A) = \mbox{Ran}(-\eta)$. This is certainly true when $A$ is the Laplacian ($a_{ij} = \delta_{ij}$ and $\nu = 0$). We investigate it next in the more general context.

\section{Results}

Let $B$ be a closed linear operator in a real Banach space $E$ with domain $D_{B} \subseteq E$. We recall that its spectrum $\sigma(B)$ is the closed subset of $\C$ defined by
$$ \sigma(B):=\{\lambda \in \C; \lambda I - B~\textrm{fails to be invertible from}~D_{B}~\mbox{to}~E\}.$$
The resolvent set of $B$ is $\rho(B): = \C \setminus \sigma(B)$. We need a general result. It is surely well--known, but we include a proof for the reader's convenience.

\begin{prop} \label{genres}
If $(S_{t}, t \geq 0)$ is a strongly continuous self--adjoint contraction semigroup acting in a Hilbert space $H$ and having infinitesimal generator $B$ then
$$ \sigma(B) \subseteq (-\infty, 0].$$
\end{prop}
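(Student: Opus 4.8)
The plan is to combine two classical facts about $B$: the contraction property of the semigroup confines $\sigma(B)$ to the closed left half-plane, while self-adjointness of the semigroup confines $\sigma(B)$ to the real axis; intersecting these two regions gives $(-\infty,0]$.

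First I would show that every $\lambda\in\C$ with $\mathrm{Re}(\lambda)>0$ belongs to $\rho(B)$. For such a $\lambda$ and $f\in H$, set $R(\lambda)f:=\int_{0}^{\infty}e^{-\lambda t}S_{t}f\,dt$, a Bochner integral which converges absolutely because $\|e^{-\lambda t}S_{t}f\|\le e^{-\mathrm{Re}(\lambda)t}\|f\|$; the standard Hille--Yosida computation, using strong continuity and the semigroup law, identifies $R(\lambda)$ as a two-sided inverse of $\lambda I-B$ with $\|R(\lambda)\|\le \mathrm{Re}(\lambda)^{-1}$. Hence $\{\lambda:\mathrm{Re}(\lambda)>0\}\subseteq\rho(B)$, i.e. $\sigma(B)\subseteq\{\lambda\in\C:\mathrm{Re}(\lambda)\le 0\}$.

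Next I would argue that $B$ is in fact self-adjoint, not merely symmetric. Since $H$ is reflexive, the adjoint family $(S_{t}^{*},t\ge 0)$ is again a strongly continuous contraction semigroup, and its infinitesimal generator is $B^{*}$; the hypothesis $S_{t}=S_{t}^{*}$ for all $t$ therefore forces $B=B^{*}$. Self-adjointness then gives $\sigma(B)\subseteq\R$ by the elementary estimate: for $\lambda=\alpha+i\beta$ with $\beta\ne 0$ and $f\in D_{B}$ one has $\|(\lambda I-B)f\|^{2}=\|(\alpha I-B)f\|^{2}+\beta^{2}\|f\|^{2}\ge\beta^{2}\|f\|^{2}$, so $\lambda I-B$ is bounded below (hence injective with closed range); applying the same bound to $\bar\lambda I-B=(\lambda I-B)^{*}$ shows this range is dense, hence all of $H$, so $\lambda\in\rho(B)$.

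Putting the two inclusions together yields $\sigma(B)\subseteq\R\cap\{\mathrm{Re}(\lambda)\le 0\}=(-\infty,0]$, which is the assertion. The one point that genuinely uses the self-adjointness hypothesis — and hence the only real obstacle — is the upgrade from ``each $S_{t}$ self-adjoint'' to ``$B$ self-adjoint''; a reader who prefers could instead invoke the spectral theorem for $B$ and the functional calculus $S_{t}=e^{tB}$ directly, the cost being a uniqueness-of-generators argument to identify the functional-calculus semigroup with $(S_{t})$.
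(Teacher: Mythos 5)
Your proposal is correct and follows the same route as the paper: the paper's proof likewise intersects the Hille--Yosida inclusion $\sigma(B)\subseteq\{\Re(z)\le 0\}$ for generators of contraction semigroups with the reality of the spectrum coming from self-adjointness of $B$. You simply spell out the two ingredients the paper cites as known (the Laplace-transform construction of the resolvent, and the passage from $S_{t}=S_{t}^{*}$ to $B=B^{*}$), which is a reasonable amount of extra detail but not a different argument.
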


\begin{proof} Since $B$ generates a contraction semigroup, by the Hille--Yosida theorem, $\sigma(B) \subseteq \{z \in \C; \Re(z) \leq 0\}$, but $B$ is self--adjoint and so $\sigma(B) \subseteq \R$. The result follows.
\end{proof}

Our approach to investigating the spectrum of $A$ is by a standard argument using Fourier transforms, as is well--known for the case of the Laplacian.

\begin{theorem} \label{incl}
$$ \sigma(A) = \mbox{Ran}(-\eta).$$
\end{theorem}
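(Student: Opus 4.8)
The plan is to reduce the computation to the spectrum of a multiplication operator via the pseudo--differential representation (\ref{pseudo}). Passing to the complexification $L^{2}(\Rd;\C)$ (on which $\sigma(A)$ is unchanged), the Fourier transform $\mathcal{F}$ is a unitary operator, and (\ref{pseudo}) together with the description of $\Dom(A) = {\mathcal H}_{\eta}(\Rd)$ shows that $\mathcal{F}$ intertwines $A$ with the (maximal) operator $M_{-\eta}$ of multiplication by the continuous function $-\eta$, whose domain $\{g \in L^{2}(\Rd;\C) : \eta g \in L^{2}\}$ is precisely $\mathcal{F}({\mathcal H}_{\eta}(\Rd))$. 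Since the spectrum is a unitary invariant, $\sigma(A) = \sigma(M_{-\eta})$, and I would prove the two inclusions by hand rather than quoting the spectral theorem, in keeping with the elementary approach announced in the introduction.

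For $\sigma(A) \subseteq \overline{\mbox{Ran}(-\eta)}$: if $\lambda \in \C \setminus \overline{\mbox{Ran}(-\eta)}$ then $\delta := \inf_{y \in \Rd}|\lambda + \eta(y)| = \mbox{dist}(\lambda, \mbox{Ran}(-\eta)) > 0$, so $(\lambda + \eta)^{-1}$ is bounded, and since $\eta/(\lambda + \eta) = 1 - \lambda/(\lambda + \eta)$ is bounded as well, the Fourier multiplier $R_{\lambda}g := \mathcal{F}^{-1}((\lambda + \eta)^{-1}\mathcal{F}g)$ is a bounded operator on $L^{2}(\Rd;\C)$ mapping into ${\mathcal H}_{\eta}(\Rd)$ which is readily checked to be a two--sided inverse of $\lambda I - A$; hence $\lambda \in \rho(A)$. (One may, if desired, first invoke Proposition \ref{genres} to restrict to $\lambda \in (-\infty,0]$.) For the reverse inclusion, fix $\lambda \in \overline{\mbox{Ran}(-\eta)}$, choose $y_{n} \in \Rd$ with $-\eta(y_{n}) \to \lambda$, and, using continuity of $\eta$, radii $r_{n} \downarrow 0$ with $\sup_{|y - y_{n}| \leq r_{n}}|\eta(y) - \eta(y_{n})| \leq 1/n$. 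Let $f_{n}$ be the function with $\widehat{f_{n}} = c_{n}\mathbf{1}_{B(y_{n},r_{n})}$, $c_{n} > 0$ chosen so that $\|f_{n}\|_{2} = 1$; then $f_{n} \in {\mathcal H}_{\eta}(\Rd)$ because $\widehat{f_{n}}$ has compact support and $\eta$ is locally bounded, and
\begin{equation*}
\|(\lambda I - A)f_{n}\|_{2}^{2} = c_{n}^{2}\int_{B(y_{n},r_{n})}|\lambda + \eta(y)|^{2}\,dy \leq \big(|\lambda + \eta(y_{n})| + 1/n\big)^{2} \longrightarrow 0 .
\end{equation*}
Thus $\lambda I - A$ fails to be bounded below, hence cannot be boundedly invertible, so $\lambda \in \sigma(A)$. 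Combining the two inclusions gives $\sigma(A) = \overline{\mbox{Ran}(-\eta)}$.

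Finally I would reconcile this with the stated form of the conclusion. Because $\eta$ is continuous on the connected set $\Rd$, real--valued, and satisfies $\eta \geq 0$, $\eta(0) = 0$, its range is an interval $[0,M]$ or $[0,M)$ with $M = \sup_{y}\eta(y) \in [0,\infty]$; consequently $\overline{\mbox{Ran}(-\eta)} = \mbox{Ran}(-\eta)$ unless $M$ is finite and not attained, and in particular the identity $\sigma(A) = \mbox{Ran}(-\eta)$ holds whenever $\eta$ is unbounded (so that $\mbox{Ran}(-\eta) = (-\infty,0]$), which covers the Laplacian and every process with a non--degenerate Gaussian part. I expect this range--versus--closure point, rather than any analytic difficulty, to be the only place where care is genuinely needed: the two substantive ingredients — the explicit resolvent off the range and the approximate--eigenfunction (Weyl) sequence on the range — are both routine once the reduction to $M_{-\eta}$ is in hand, and the only other thing to watch is that the domain identifications make the intertwining an honest unitary equivalence of unbounded operators rather than merely of their restrictions to a core.
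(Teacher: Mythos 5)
Your proof is correct, and while it follows the same basic route as the paper (diagonalising $A$ by the Fourier transform and reading off the spectrum of the resulting multiplication operator), your execution of both inclusions is genuinely more careful, and the extra care is not cosmetic. For $\mbox{Ran}(-\eta) \subseteq \sigma(A)$ the paper divides by $\lambda + \eta$ and asserts that the quotient ``clearly cannot'' lie in $L^{2}$ when $\lambda = -\eta(y)$ for some $y$; since the denominator vanishes only on a set that may well be Lebesgue--null, this is not yet an argument, and your Weyl sequence $\widehat{f_{n}} = c_{n}\mathbf{1}_{B(y_{n},r_{n})}$ is the honest substitute. For the reverse inclusion the paper splits into the cases $\sup\eta = K < \infty$ and $\sup\eta = \infty$, and in the first case asserts $\mbox{Ran}(\eta) = [0,K]$; as you observe, connectedness only guarantees $[0,K]$ or $[0,K)$, and the supremum need not be attained --- take $d = 1$ and $\nu = \frac{1}{2}(\delta_{-1} + \delta_{1} + \delta_{-\sqrt{2}} + \delta_{\sqrt{2}})$, so that $\eta(u) = (1 - \cos u) + (1 - \cos(\sqrt{2}u))$ has supremum $4$, approached but never attained because $\sqrt{2}$ is irrational. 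In that situation your computation gives $\sigma(A) = \overline{\mbox{Ran}(-\eta)} = [-K,0] \supsetneq \mbox{Ran}(-\eta)$, so the theorem as literally stated fails and your version $\sigma(A) = \overline{\mbox{Ran}(-\eta)}$ is the correct one; the two statements agree whenever $\eta$ is unbounded or attains its supremum, which covers every example and corollary appearing later in the paper. What your approach buys, then, is both rigour (explicit bounded resolvent off the closed range, approximate eigenfunctions on it) and the identification of the precise edge case where the published statement needs a closure; the only point you leave implicit --- that $R_{\lambda}$ is a two--sided inverse on the stated domains --- is indeed routine once the unitary intertwining with $M_{-\eta}$ is set up as you describe.
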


\begin{proof}
Consider the equation
$$ (\lambda I - A)f = g,$$
for $\lambda \in \C, f \in {\mathcal H}_{\eta}(\Rd), g \in L^{2}(\Rd)$. Taking Fourier transforms of both sides we obtain
$$ (\lambda + \eta(y))\widehat{f}(y) = \widehat{g}(y),$$
for all $y \in \Rd$. If the operator equation has a solution, then
$$(1 + \eta(\cdot))\widehat{f}(\cdot) = \frac{1 + \eta(\cdot)}{\lambda + \eta(\cdot)}\widehat{g}(\cdot) \in L^{2}(\Rd).$$
This clearly cannot be the case when $\lambda = - \eta(y)$ for some $y \in \Rd$, and it follows that $\mbox{Ran}(-\eta) \subseteq \sigma(A)$.

Our goal now is to prove that the resolvent set $\rho(A) = \C \setminus \mbox{Ran}(-\eta)$. By Proposition \ref{genres}, we know that $\C \setminus (-\infty, 0] \subseteq \rho(A)$. So it is sufficient to consider $\lambda \in (-\infty, 0) \setminus \mbox{Ran}(-\eta)$. The required result holds if $g_{\lambda}$ is bounded, where for all $u \in \Rd$,
$$ g_{\lambda}(u): = \frac{1 + \eta(\cdot)}{\lambda + \eta(\cdot)}.$$

There are two cases to consider. Firstly assume that $\sup_{u \in \Rd}\eta(u) = K < \infty$. Then $\mbox{Ran}(\eta) = [0, K]$. Choose $\epsilon > 0$ and assume that $\lambda \in (\-\infty, -K -\epsilon)$.
Then we find that
$$ \sup_{u \in \Rd}|g_{\lambda}(u)| \leq \frac{1 + K}{\epsilon} < \infty.$$ But then we conclude that
$$ (-\infty, -K) = \bigcup_{\epsilon > 0}(-\infty, -K -\epsilon) \subseteq \rho(A),$$ and so $\sigma(A) = \mbox{Ran}(-\eta)$, as required.
If $\sup_{u \in \Rd}\eta(u) = \infty$, then by the first part of the proof we have $(-\infty, 0) \subseteq \sigma(A)$ and so the required result follows directly from Proposition \ref{genres}.


\end{proof}

\begin{cor} \label{lim}
If $\lim_{u \rightarrow \infty}\eta(u) = \infty$, then $\sigma(A) = (- \infty, 0]$.
\end{cor}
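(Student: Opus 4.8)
The plan is to derive the corollary directly from Theorem~\ref{incl}: once we know $\sigma(A) = \mbox{Ran}(-\eta)$, it suffices to show that the hypothesis $\lim_{u\to\infty}\eta(u) = \infty$ forces $\mbox{Ran}(-\eta) = (-\infty,0]$.

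First I would observe that $\eta$ takes only non-negative values. This is immediate from the L\'{e}vy--Khintchine representation (\ref{symbol}): the quadratic form $u \mapsto au\cdot u$ is non-negative because $a$ is non-negative definite, and $1 - \cos(u\cdot y) \geq 0$ for every $y$, so the integral term is non-negative as well. Consequently $\mbox{Ran}(-\eta) \subseteq (-\infty,0]$, and since $\eta(0)=0$ we have $0 \in \mbox{Ran}(-\eta)$.

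For the reverse inclusion I would fix $t>0$ and produce a point $u \in \Rd$ with $\eta(u)=t$. By hypothesis there is some $u_{0}\in\Rd$ with $\eta(u_{0})>t$. The function $s\mapsto\eta(su_{0})$ is continuous on $[0,1]$ (continuity of $\eta$), vanishes at $s=0$ and exceeds $t$ at $s=1$, so by the intermediate value theorem it equals $t$ at some point of $(0,1)$. Hence $[0,\infty)\subseteq\mbox{Ran}(\eta)$, i.e. $(-\infty,0]\subseteq\mbox{Ran}(-\eta)$. Combining this with the previous paragraph gives $\mbox{Ran}(-\eta)=(-\infty,0]$, and Theorem~\ref{incl} then yields $\sigma(A)=(-\infty,0]$.

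I do not anticipate any real obstacle; the argument is elementary. The only two points that call for a moment's care are extracting the non-negativity of $\eta$ from (\ref{symbol}) and working along a radial slice rather than with $\eta$ itself, since $\eta$ need not be radial and $\Rd\setminus\{0\}$ is disconnected when $d=1$ --- using the segment from $0$ to $u_{0}$ sidesteps both issues uniformly in the dimension.
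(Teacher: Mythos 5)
Your argument is correct and is exactly the elaboration the paper intends: the paper's proof simply cites Theorem~\ref{incl} together with the continuity of $\eta$, and your use of the non-negativity of $\eta$ from (\ref{symbol}), the value $\eta(0)=0$, and the intermediate value theorem along the segment from $0$ to $u_{0}$ is the standard way to fill in that one line. No issues.
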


\begin{proof} This is an immediate consequence of Theorem \ref{incl} and the continuity of $\eta$.
\end{proof}

We note that the condition of Corollary \ref{lim} was also required in \cite{CMS} to obtain the result alluded to in the introduction.

Some well--known and important examples satisfy the conditions of Corollary \ref{lim}. These include
\begin{itemize}
\item The Laplacian $\Delta$, where $\eta(u) = |u|^{2}$.

\item The fractional Laplacian $-(-\Delta)^{\alpha/2}$, where $\eta(u) = |u|^{\alpha}$ with $0 < \alpha < 2$.

\item The relativistic Schr\"{o}dinger operator $-\sqrt{b^{2}I - \Delta} + bI$, where $\eta(u) = \sqrt{b^{2} + |u|^{2}} - b$ for some $b > 0$.

\end{itemize}

More generally, to obtain $\sigma(A) = (- \infty, 0]$ we may take $A = -f(-\Delta)$, where $f$ is a strictly increasing Bernstein function for which $f(0) = 0$, in which case the underlying L\'{e}vy process is a subordinated Brownian motion (see e.g. \cite{SSV}). The above three examples are all special cases of this class.

To find other interesting class of examples, we first write $A = A_{1} + A_{2}$, where $A_{1}$ is the second order differential operator in (\ref{gen}).

\begin{lemma} \label{bound} If $\nu$ is a finite measure, then $A_{2}$ is a bounded operator in $L^{2}(\Rd)$, with $\sigma(A_{2}) \subseteq [-2M, 0]$.
\end{lemma}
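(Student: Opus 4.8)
The plan is to realise $A_2$ as a Fourier multiplier operator and to read off both claims from its symbol. Following the splitting $A = A_1 + A_2$ with $A_1 = \sum_{i,j}a_{ij}\partial_i\partial_j$, set $\eta_1(u) := au\cdot u$ and $\eta_2(u) := \int_{\Rd}(1-\cos(u\cdot y))\,\nu(dy)$, so that $\eta = \eta_1 + \eta_2$ by (\ref{symbol}). For $f \in C^2_c(\Rd)$ the differential operator satisfies the classical identity $\widehat{A_1 f}(y) = -(ay\cdot y)\widehat{f}(y) = -\eta_1(y)\widehat{f}(y)$, while (\ref{pseudo}) gives $\widehat{Af} = -\eta\,\widehat{f}$; subtracting, $\widehat{A_2 f} = -\eta_2\,\widehat{f}$ on the dense subspace $C^2_c(\Rd)$. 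Thus $A_2$ is, on the Fourier side, multiplication by $-\eta_2$.

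Writing $M := \nu(\Rd)$, the elementary inequality $0 \le 1 - \cos\theta \le 2$ gives $0 \le \eta_2(u) \le 2M$ for every $u \in \Rd$, so $-\eta_2$ is a bounded real--valued function with range contained in $[-2M,0]$. Hence $f \mapsto \mathcal{F}^{-1}(-\eta_2\,\widehat{f})$ is a bounded self--adjoint operator on $L^{2}(\Rd)$ of norm at most $2M$, and it coincides with $A_2$ on $C^2_c(\Rd)$; by density this bounded operator is $A_2$, which settles boundedness. For the spectrum, fix $\lambda \in \C \setminus [-2M,0]$ and put $\delta := \mathrm{dist}(\lambda,[-2M,0]) > 0$. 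Since $-\eta_2(u) \in [-2M,0]$ for all $u$, we get $|\lambda + \eta_2(u)| \ge \delta$ for all $u$, so $u \mapsto (\lambda + \eta_2(u))^{-1}$ is bounded by $\delta^{-1}$, and therefore $g \mapsto \mathcal{F}^{-1}\big((\lambda+\eta_2)^{-1}\widehat{g}\big)$ is a bounded two--sided inverse of $\lambda I - A_2$ (which on the Fourier side is multiplication by $\lambda + \eta_2$). Thus $\lambda \in \rho(A_2)$, and as $\lambda$ was arbitrary outside $[-2M,0]$ we conclude $\sigma(A_2) \subseteq [-2M,0]$.

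I expect the argument to be essentially routine; the only points needing a little care are the passage from the pointwise description of $A_2$, which makes literal sense only on a core such as $C^2_c(\Rd)$, to its bounded extension to all of $L^{2}(\Rd)$ via the multiplier representation, and the bookkeeping that turns $0 \le 1-\cos\theta \le 2$ into the constant $2M$. A Fourier--free route is also available: since $\nu$ is finite and symmetric, $A_2$ is, up to an additive multiple of the identity, a multiple of the convolution operator $f \mapsto f * \nu$, which is bounded of norm $\le M$ and self--adjoint by Minkowski's integral inequality; boundedness follows at once, and the spectral inclusion then follows from the fact that a bounded self--adjoint operator has $\sigma(A_2) \subseteq \big[\inf_{\|f\|=1}\la A_2 f, f\ra,\ \sup_{\|f\|=1}\la A_2 f, f\ra\big]$, the required two--sided bound on $\la A_2 f, f\ra$ coming again from $0 \le 1-\cos(u\cdot y) \le 2$ (equivalently, from Cauchy--Schwarz applied to translates of $f$).
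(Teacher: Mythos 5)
Your argument is correct, and it differs from the paper's in a way worth noting. The paper proves boundedness entirely in real space: it writes $A_{2}=2A_{3}$ with $A_{3}f(x)=\int_{\Rd}(f(x+y)-f(x))\nu(dy)$ and estimates $\|A_{3}f\|^{2}\leq 4M^{2}\|f\|^{2}$ directly by Cauchy--Schwarz and Fubini, which has the advantage that the integral formula is shown to define a bounded operator on \emph{all} of $L^{2}(\Rd)$ at once, with no need for the density/extension step you rightly flag as the delicate point of your multiplier route (your closing ``Fourier--free'' sketch via convolution with the finite measure $\nu$ is essentially the paper's argument). For the spectral inclusion the paper does exactly what you do, only more tersely: it observes that the symbol of $A_{2}$ is $-\eta_{2}$ with $\eta_{2}(u)=\int_{\Rd}(1-\cos(u\cdot y))\nu(dy)\in[0,2M]$ and invokes the multiplier description (in effect Theorem \ref{incl} applied to the pure--jump part), whereas you spell out the resolvent bound $|\lambda+\eta_{2}(u)|\geq\mathrm{dist}(\lambda,[-2M,0])$ explicitly. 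Your unified multiplier approach buys a sharper norm bound ($\|A_{2}\|\leq 2M$ versus the $4M$ implicit in the paper's estimate) and makes the spectral statement transparent; the paper's approach is more self--contained on the boundedness side. Both are sound.
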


\begin{proof} Let $M := \nu(\Rd)$ and define $A_{3}f(x) = \int_{\Rd}(f(x+y) - f(x))\nu(dy)$, for $f \in L^{2}(\Rd), x \in \Rd$. The linear operator $A_{3}$ is bounded as by the Cauchy--Schwarz inequality and Fubini's theorem,
\bean ||A_{3}f||^{2} & = & \int_{\Rd}\left(\int_{\Rd}(f(x+y) - f(x))\nu(dy)\right)^{2}dx \\
& \leq & M \int_{\Rd} \int_{\Rd}(f(x+y) - f(x))^{2}dx \nu(dy) \\
& \leq & 2M \int_{\Rd} \int_{\Rd}f(x+y)^{2}dx \nu(dy) + 2M^{2}||f||^{2} \\
& \leq & 4M^{2}||f||^{2}, \eean
and the first result follows by observing that $A_{2} = 2A_{3}$.
The second result follows from the fact that for all $u \in \Rd$
$$\eta(u) = \int_{\Rd}(1 - \cos(u \cdot y))\nu(dy) \leq 2M.$$
\end{proof}

A necessary and sufficient condition for $\sigma(A_{2}) = [-2M, 0]$ is that
$$ \nu(\{y \in \Rd; u\cdot y = (2n+1)\pi~\mbox{for some}~u \in \Rd, n \in \Z\}) > 0.$$ Now let $\alpha > 0$ be arbitrary. For an example where $\sigma(A_{2}) = [-\alpha, 0]$, take $d = 1$ and $\nu = \frac{\alpha}{4}(\delta_{-1} + \delta_{1})$.

We conjecture that $A$ is bounded if and only if $a = 0$ and $\nu$ is finite.

\begin{cor} If $\nu$ is finite and there exist $b_{i} > 0$ for all $i=1, \ldots, d$ such that $$au \cdot u \geq \sum_{i=1}^{d}b_{i}u_{i}^{2}$$ for all $u \in \Rd$, then $\sigma(A) = (-\infty, 0)$.
\end{cor}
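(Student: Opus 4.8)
The plan is to apply Theorem~\ref{incl} directly. Under the hypotheses we have $a \neq 0$ in a controlled way and $\nu$ finite, so the characteristic exponent splits as $\eta(u) = au\cdot u + \eta_{2}(u)$, where $\eta_{2}(u) = \int_{\Rd}(1-\cos(u\cdot y))\nu(dy)$ satisfies $0 \leq \eta_{2}(u) \leq 2M$ by Lemma~\ref{bound}. The quadratic part is bounded below by $\sum_{i}b_{i}u_{i}^{2} \geq \beta|u|^{2}$ with $\beta := \min_{i}b_{i} > 0$, so $\eta(u) \geq \beta|u|^{2} \to \infty$ as $u \to \infty$. Hence $\mbox{Ran}(\eta)$ is an interval of the form $[0,\infty)$ or $[m,\infty)$ for some $m \geq 0$; but since $\eta(0) = 0$ we get $\mbox{Ran}(\eta) = [0,\infty)$ by continuity of $\eta$ and connectedness of $\Rd$. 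Wait --- this actually lands us in the hypothesis of Corollary~\ref{lim}, which gives $\sigma(A) = (-\infty,0]$, a \emph{closed} half-line, not the open one claimed. So the statement as written should presumably read $\sigma(A) = (-\infty,0]$, and the proof is simply: $\eta$ is continuous, $\eta(0)=0$, and $\eta(u) \geq \beta|u|^{2} \to \infty$, so by Corollary~\ref{lim} (equivalently Theorem~\ref{incl}) we have $\sigma(A) = \mbox{Ran}(-\eta) = (-\infty,0]$.

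Concretely, the steps I would carry out are: (i) set $\beta = \min_{1\le i\le d} b_{i} > 0$ and record the bound $au\cdot u \geq \beta|u|^{2}$; (ii) observe $\eta_{2} \geq 0$ pointwise, so $\eta(u) \geq \beta|u|^{2}$; (iii) conclude $\lim_{u\to\infty}\eta(u) = \infty$; (iv) invoke Corollary~\ref{lim} to get $\sigma(A) = (-\infty,0]$. No estimate here is delicate --- each step is one line.

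The only genuine subtlety --- and the point I would flag as the "main obstacle," such as it is --- is reconciling the claimed answer $(-\infty,0)$ with what the machinery actually delivers. Theorem~\ref{incl} gives $\sigma(A) = \mbox{Ran}(-\eta)$, and since $\sigma(A)$ is always closed and $0 = -\eta(0) \in \mbox{Ran}(-\eta)$, the spectrum must contain $0$; it cannot equal the open half-line. So either the corollary is a typo for $(-\infty,0]$, or there is an unstated extra hypothesis ensuring $0 \notin \sigma(A)$ (which would contradict Proposition~\ref{genres}'s companion fact that $0$ lies in the spectrum of any such generator unless the semigroup decays, which it need not here). I would therefore prove the statement with $(-\infty,0]$ in place of $(-\infty,0)$, noting that the finiteness of $\nu$ in the hypothesis is in fact redundant once the coercive lower bound on $a$ is assumed --- it is only the ellipticity of $A_{1}$ that drives the conclusion.
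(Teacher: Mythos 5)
Your proof is correct and takes essentially the same route as the paper, whose entire argument is the one-line observation that $\eta(u) \geq \sum_{i=1}^{d}b_{i}u_{i}^{2} \rightarrow \infty$ as $u \rightarrow \infty$, followed by an (implicit) appeal to Corollary \ref{lim}. You are also right on both side points: the conclusion must read $(-\infty,0]$ rather than $(-\infty,0)$, since the spectrum is closed and contains $-\eta(0)=0$, so the open half-line in the statement is a typo; and the finiteness of $\nu$ is indeed redundant given the coercive lower bound on $a$.
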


\begin{proof} This follows from the fact that
$\eta(u) \geq \sum_{i=1}^{d}b_{i}u_{i}^{2} \rightarrow \infty$, as $u \rightarrow \infty.$
\end{proof}

Note that the condition in the last corollary is satisfied if $a$ is bounded below in the sense that there exists $b > 0$ so that $a \geq bI$.

\end{document}